\documentclass[12pt,a4paper]{amsart}
\usepackage[width=0.75\paperwidth,height=0.85\paperheight,twoside=false,includehead,includefoot]{geometry}
\allowdisplaybreaks[1]

\usepackage{amstext}
\usepackage{amsthm}
\usepackage{amssymb}

\newtheorem{thm}{Theorem}[section]
\newtheorem{lem}[thm]{Lemma}
\newtheorem{prop}[thm]{Proposition}

\theoremstyle{definition}
\newtheorem{defn}[thm]{Definition}
\newtheorem{ex}[thm]{Example}
\theoremstyle{remark}
\newtheorem{rem}[thm]{Remark}

\begin{document}

\title{Sum formula for multiple zeta function}

\author{Minoru Hirose}
\address[Minoru Hirose]{Institute for Advanced Research, Nagoya University,  Furo-cho, Chikusa-ku, Nagoya, 464-8602, Japan}
\email{minoru.hirose@math.nagoya-u.ac.jp}

\author{Hideki Murahara}
\address[Hideki Murahara]{The University of Kitakyushu,  4-2-1 Kitagata, Kokuraminami-ku, Kitakyushu, Fukuoka, 802-8577, Japan}
\email{hmurahara@mathformula.page}

\author{Tomokazu Onozuka}
\address[Tomokazu Onozuka]{Institute of Mathematics for Industry, Kyushu University 744, Motooka, Nishi-ku,
Fukuoka, 819-0395, Japan}
\email{t-onozuka@imi.kyushu-u.ac.jp}

\subjclass[2010]{Primary 11M32}
\keywords{Euler-Zagier multiple zeta function, Multiple zeta values, Sum formula}

\begin{abstract}
 The sum formula is a well known relation in the field of the multiple
 zeta values. In this paper, we present its generalization for the
 Euler-Zagier multiple zeta function.
\end{abstract}

\maketitle

\section{Introduction}
The Euler-Zagier multiple zeta function (MZF) is defined by
\begin{align*}
 \zeta(s_{1},\dots,s_{r}):=\sum_{1\le n_{1}<\cdots<n_{r}}\frac{1}{n_{1}^{s_{1}}\cdots n_{r}^{s_{r}}},
\end{align*}
where $s_{i}\in\mathbb{C}\,(i=1,\ldots,r)$ are complex variables.
The number of the variables $r$ is called the depth of $\zeta(s_{1},\dots,s_{r})$.
Matsumoto \cite{Mat02} proved that the series is absolutely convergent
in the domain
\[
 \{(s_{1},\ldots,s_{r})\in\mathbb{C}^{r}\;|\;\Re(s_{l}+\cdots+s_{r})>r-l+1\;\; (l=1,\dots,r)\}.
\]
Akiyama, Egami, and Tanigawa
\cite{AET01} and Zhao \cite{Zha00} independently proved that $\zeta(s_{1},\dots,s_{r})$
is meromorphically continued to the whole space $\mathbb{C}^{r}$.
Furthermore, all possible poles of $\zeta(s_{1},\dots,s_{r})$ are located on $s_{l}+\cdots+s_{r}\in\mathbb{Z}_{\le r-l+1}$ for $l=1,\dots,r$.
Note that some possible poles are known not to be actual poles,
but we do not use this fact in this paper (for details, see \cite{AET01}).

The special values $\zeta(k_{1},\dots,k_{r})$ with $k_1,\dots,k_{r-1}\in\mathbb{Z}_{\ge1}$ and $k_{r}\in\mathbb{Z}_{\ge2}$ are called  multiple zeta values (MZVs).
The MZVs are real numbers and known to satisfy many kinds
of algebraic relations over $\mathbb{Q}.$
One of the most fundamental relations is the sum formula:
\begin{prop}[Sum formula; Granville \cite{Gra97}, Zagier] \label{sum}
 For positive integers $k,r$ with $k>r$, we have
 \begin{align*}
  \sum_{\substack{ k_{1}+\dots+k_{r}=k \\ k_1,\dots,k_{r-1}\ge1,k_{r}\ge2 }}
  \zeta(k_{1},\dots,k_{r}) & =\zeta(k).
 \end{align*}
\end{prop}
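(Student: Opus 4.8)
The plan is to encode the weight in a single generating variable and reduce the proposition to one closed-form evaluation. First I would fix the depth $r$, abbreviate the left-hand side as $L(k,r)$, and interchange the (absolutely convergent, for $|u|<1$) summations so that for each ordered tuple $1\le n_{1}<\cdots<n_{r}$ the inner sum runs over all admissible compositions of the weight. Marking the weight by $u$, the inner geometric series factor:
\begin{align*}
\sum_{\substack{k_{i}\ge1\ (i<r)\\ k_{r}\ge2}}\frac{u^{k_{1}+\cdots+k_{r}}}{n_{1}^{k_{1}}\cdots n_{r}^{k_{r}}}
=\left(\prod_{i=1}^{r-1}\frac{u}{n_{i}-u}\right)\frac{u^{2}}{n_{r}(n_{r}-u)}
=\frac{u^{r+1}}{n_{r}}\prod_{i=1}^{r}\frac{1}{n_{i}-u}.
\end{align*}
Hence $\sum_{k}L(k,r)\,u^{k}=\sum_{1\le n_{1}<\cdots<n_{r}}\frac{u^{r+1}}{n_{r}}\prod_{i=1}^{r}\frac{1}{n_{i}-u}$, whereas on the right $\sum_{k\ge2}\zeta(k)u^{k}=\sum_{n\ge1}\frac{u^{2}}{n(n-u)}$; everything is now a statement about these power series in $u$.

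Next I would also record the depth by a second variable $y$. Singling out the largest index $N=n_{r}$, the remaining ordered sum over $n_{1}<\cdots<n_{r-1}<N$ is the elementary symmetric polynomial of degree $r-1$ in $\frac{1}{1-u},\dots,\frac{1}{(N-1)-u}$, so that
\begin{align*}
G(u,y):=\sum_{r\ge1}\frac{y^{r-1}}{u^{r+1}}\sum_{k}L(k,r)\,u^{k}
=\sum_{N\ge1}\frac{1}{N(N-u)}\prod_{m=1}^{N-1}\frac{m-u+y}{m-u}.
\end{align*}
Writing $Q_{N}:=\prod_{m=1}^{N-1}\frac{m-u+y}{m-u}$, the factors satisfy $\frac{Q_{N+1}}{N-u+y}=\frac{Q_{N}}{N-u}$; equivalently $Q_{N}$ is a ratio of Gamma factors, and this is the mechanism I would exploit to evaluate the sum over $N$.

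The identity to aim for is
\begin{align*}
G(u,y)=\frac{P(y)-P(u)}{y-u},\qquad
P(x):=\sum_{s\ge2}\zeta(s)x^{s-1}=\sum_{n\ge1}\left(\frac{1}{n-x}-\frac{1}{n}\right).
\end{align*}
Indeed, expanding the right-hand side gives $\sum_{p,j\ge0}\zeta(p+j+2)\,y^{p}u^{j}$, and reading off the coefficient of $y^{r-1}u^{j}$ returns exactly $L(r+1+j,\,r)=\zeta(r+1+j)$. Since the smallest admissible weight in depth $r$ is $r+1$, this is precisely the claim $L(k,r)=\zeta(k)$ for every $k>r$; thus the proposition is equivalent to the single evaluation above. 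As a reassurance, the coefficient of $y^{0}$ reduces to $\sum_{N}\frac{1}{N(N-u)}=P(u)/u$, and the coefficient of $y^{1}$ is Euler's classical depth-two sum formula.

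The hard part will be establishing that closed form rigorously. Evaluating $\sum_{N\ge1}\frac{Q_{N}}{N(N-u)}$ amounts to a classical digamma/hypergeometric summation whose value is $\frac{P(y)-P(u)}{y-u}=\frac{\psi(1-u)-\psi(1-y)}{y-u}$; carrying out the telescoping built on the relation above and, just as importantly, justifying the interchanges of summation so that all series converge for small $u,y$, is the real work. Once the two sides are known to agree as convergent double power series, comparison of the coefficients of $y^{r-1}u^{j}$ finishes the proof, with the hypothesis $k>r$ appearing automatically as the condition $j=k-r-1\ge0$.
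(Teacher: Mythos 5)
Your reduction of the sum formula to a single generating-function identity is correctly executed: the geometric summation over the exponents, the identification of the sum over $n_{1}<\cdots<n_{r-1}<N$ with an elementary symmetric polynomial, and the coefficient bookkeeping that turns
\[
G(u,y)=\sum_{N\ge1}\frac{1}{N(N-u)}\prod_{m=1}^{N-1}\frac{m-u+y}{m-u}=\frac{P(y)-P(u)}{y-u}
\]
into the statement $L(k,r)=\zeta(k)$ for all $k>r$ are all sound, and convergence for small $|u|,|y|$ is unproblematic since $Q_{N}\asymp N^{\Re y}$. This is also a genuinely different route from the paper's: there Proposition \ref{sum} is obtained as the integer specialization of Theorem \ref{main2}, whose proof works directly on the double series --- summing the geometric series in the auxiliary index $n$ first, applying the partial fraction $\frac{1}{m^{2}-tm}=\frac{1}{t}\bigl(\frac{1}{m-t}-\frac{1}{m}\bigr)$, and organizing the resulting harmonic-type sums into the functions $F_{d}$ so that everything telescopes by induction on the depth. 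Your version stays at the level of MZVs but trades that induction for one closed-form evaluation.

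The gap is that this closed-form evaluation \emph{is} the theorem, and you have not proved it. By your own coefficient extraction the displayed identity is equivalent to the full family of statements $L(k,r)=\zeta(k)$; already its specialization $u=0$ is the nontrivial fact $\zeta(\{1\}^{r-1},2)=\zeta(r+1)$. So deferring it to ``a classical digamma/hypergeometric summation'' leaves the entire mathematical content unproven. Moreover, the telescoping relation you record, $yQ_{N}/(N-u)=Q_{N+1}-Q_{N}$, does not finish the job by itself: after Abel summation it converts $yG(u,y)$ into $-1+\sum_{N\ge2}Q_{N}/(N(N-1))$, and the new sum admits no compatible telescoping because the recursion for $Q_{N}$ involves $N-u$ rather than $N-1$. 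To close the argument you need an actual evaluation --- for instance write $Q_{N}=\frac{\Gamma(1-u)}{\Gamma(1-u+y)\Gamma(-y)}\int_{0}^{1}t^{N-u+y-1}(1-t)^{-y-1}\,dt$ for $\Re y<0$, interchange sum and integral, and reduce to a Beta/digamma integral; or equivalently prove the unit-argument evaluation $\frac{1}{1-u}\,{}_3F_2(1,1,1-u+y;2,2-u;1)=\frac{\psi(1-u)-\psi(1-y)}{y-u}$ --- and then extend from $\Re y<0$ to a neighbourhood of $y=0$ by analytic continuation before comparing Taylor coefficients. Until one of these is carried out, what you have is a correct and illuminating reformulation, not a proof.
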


From the analytic point of view, Matsumoto \cite{Mat06} raised the
question whether the known relations among MZVs are valid only
for positive integers or not.
It is known that the harmonic relations,
e.g., $\zeta(s_{1})\zeta(s_{2})=\zeta(s_{1},s_{2})+\zeta(s_{2},s_{1})+\zeta(s_{1}+s_{2})$
are valid not only for positive integers but for complex numbers.
Matsumoto and Tsumura \cite[Proposition 2.1]{MT05} gave a relation
which is a generalization of Proposition \ref{sum} with $r=2$.
This relation consists not only of the MZFs but also of the Mordell-Tornheim multiple zeta functions.
Based on such circumstances, we give a generalization of Proposition \ref{sum} for the MZF.
\begin{thm} \label{main1}
 For $s\in\mathbb{C}$ with $\Re(s)>1$ and $s\ne2$, we have
 \begin{align*}
  \sum_{n=0}^{\infty}(\zeta(s-n-2,n+2)-\zeta(-n,s+n)) & =\zeta(s).
 \end{align*}
\end{thm}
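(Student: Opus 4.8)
The plan is to work first in the region $\Re(s)>2$, where every double zeta function appearing on the left is represented by its absolutely convergent defining series, and to collapse the summation over $n$ by a geometric series. Writing $\zeta(s-n-2,n+2)=\sum_{1\le m<\ell}m^{2-s}\ell^{-2}(m/\ell)^{n}$ and $\zeta(-n,s+n)=\sum_{1\le m<\ell}\ell^{-s}(m/\ell)^{n}$, the $n$-th summand becomes $\sum_{1\le m<\ell}\ell^{-2}(m^{2-s}-\ell^{2-s})(m/\ell)^{n}$. After checking absolute convergence (the tail near $m=\ell$ is controlled by $\sum_{\ell}\ell^{1-\Re(s)}\log\ell$, convergent for $\Re(s)>2$), I would interchange the sums over $n$ and over $(m,\ell)$ and use $\sum_{n\ge0}(m/\ell)^{n}=\ell/(\ell-m)$ to obtain
\[
 \sum_{n=0}^{\infty}\bigl(\zeta(s-n-2,n+2)-\zeta(-n,s+n)\bigr)=\sum_{1\le m<\ell}\frac{m^{2-s}-\ell^{2-s}}{\ell(\ell-m)}=:D(s).
\]

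The second step is to identify $D(s)=\zeta(s)$. Substituting $\ell=m+j$ exhibits $D$ as a difference of Mordell--Tornheim double zeta functions, $D(s)=\sum_{m,j\ge1}m^{2-s}j^{-1}(m+j)^{-1}-\sum_{m,j\ge1}j^{-1}(m+j)^{1-s}$, which is precisely the shape occurring in the $r=2$ relation of Matsumoto--Tsumura \cite{MT05}; invoking it gives $D(s)=\zeta(s)$. A more self-contained route is Carlson's theorem. The classical sum formula (Proposition \ref{sum}) in depth two shows $D(k)=\zeta(k)$ at every integer $k\ge3$: the finite geometric sum over $2\le b\le k-1$ reproduces exactly $\sum_{1\le m<\ell}(\ell-m)^{-1}(m^{2-k}\ell^{-1}-\ell^{1-k})$, which is $D(k)$. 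Meanwhile the uniform-in-$t$ bound $|D(\sigma+it)|\le\sum_{1\le m<\ell}(m^{2-\sigma}+\ell^{2-\sigma})/(\ell(\ell-m))$ shows that $D-\zeta$ is bounded and holomorphic on each half-plane $\Re(s)\ge\sigma_{0}>2$, hence of exponential type zero; since it vanishes at all integers there, Carlson's theorem forces $D\equiv\zeta$ on $\Re(s)>2$.

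The final step is to pass from $\Re(s)>2$ to $\Re(s)>1$ by analytic continuation. Here I would show that the series $\sum_{n\ge0}(\zeta(s-n-2,n+2)-\zeta(-n,s+n))$ converges locally uniformly and defines a holomorphic function on $\Re(s)>1$, using the known meromorphic continuation and polynomial growth of the Euler--Zagier double zeta function due to Akiyama--Egami--Tanigawa \cite{AET01} and Zhao \cite{Zha00}. In particular the two terms $\zeta(s-n-2,n+2)$ and $\zeta(-n,s+n)$ each share a potential singular hyperplane along $s=2$, and one must verify that their residues cancel so that every summand, and hence the full sum, is holomorphic there. Once the left-hand side is holomorphic on the connected region $\Re(s)>1$ and agrees with $\zeta(s)$ on $\Re(s)>2$, the identity theorem completes the proof.

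I expect the last step to be the principal obstacle. For $1<\Re(s)\le2$ the defining series of the first factor no longer converges, so $\zeta(s-n-2,n+2)$ must be handled through its continuation, and the delicate point is to obtain estimates on these continued values that are uniform in $n$ as $n\to\infty$; this is what guarantees that the termwise cancellation between the two families survives in the limit and that holomorphy is preserved. By contrast, the geometric collapse of the first step and the Carlson (or Matsumoto--Tsumura) evaluation of the second are comparatively routine once the convergence bookkeeping in $\Re(s)>2$ is in place.
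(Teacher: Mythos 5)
Your first step reproduces the paper's computation exactly: the geometric collapse of the sum over $n$ yields the double sum \eqref{last sum}, namely $D(s)=\sum_{0<m<\ell}\frac{1}{\ell(\ell-m)}\bigl(m^{2-s}-\ell^{2-s}\bigr)$. For the evaluation $D(s)=\zeta(s)$, however, you take a much heavier route than necessary. The paper finishes in three lines by partial fractions: writing $\frac{1}{\ell(\ell-m)}=\frac{1}{m}\bigl(\frac{1}{\ell-m}-\frac{1}{\ell}\bigr)$ in the first piece and summing the second piece over $m$ first gives
$\sum_{m>0}m^{1-s}\bigl(1+\tfrac12+\cdots+\tfrac1m\bigr)-\sum_{\ell>0}\ell^{1-s}\bigl(1+\tfrac12+\cdots+\tfrac1{\ell-1}\bigr)=\sum_{m>0}m^{-s}=\zeta(s)$,
a telescoping of harmonic numbers that needs no input beyond the absolute convergence you have already checked. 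Your alternative via Proposition \ref{sum} at integer points plus Carlson's theorem (or via Matsumoto--Tsumura \cite{MT05}) does appear to be sound --- the interpolation data $D(k)=\zeta(k)$ for $k\ge3$ together with boundedness of $D-\zeta$ on $\Re(s)\ge3$ is enough for Carlson --- but it imports the classical sum formula as an ingredient of a theorem that is meant to generalize it, and it replaces an elementary identity with a uniqueness principle.

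The genuine gap is your third step, which you correctly flag as the principal obstacle but leave entirely as a plan. Saying that the singularities at $s=2$ should cancel and that one ``must obtain estimates uniform in $n$'' is a statement of the problem, not a solution; all of the analytic content of the theorem for $1<\Re(s)\le2$ lives there. The paper supplies it by substituting the Akiyama--Egami--Tanigawa representation \eqref{eq:(1)} with $l=0$ (Euler--Maclaurin at order zero) into each summand: the $-\frac12\zeta(s)$ terms cancel, the polar terms combine into $\frac{(s-2)\zeta(s-1)}{(n+1)(s+n-1)}$, which is both holomorphic at $s=2$ and $O(n^{-2})$ uniformly on compacta, and the two integral remainders combine, after the expansion $(m/t)^{s-2}=1+O\bigl(\frac{t-m}{t}\bigr)$, into a main term and an error term whose absolute convergence over $n$ and $m$ for $\sigma>1$ is exactly Lemma \ref{lem:The-series} (both are $\ll\sum_{m}m^{-\sigma}\log m$). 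Without some such explicit, $n$-uniform representation of the continued values $\zeta(s-n-2,n+2)$ and $\zeta(-n,s+n)$, the locally uniform convergence of the series on $1<\Re(s)\le2$ --- and hence the appeal to the identity theorem --- is unjustified, so your argument as written proves the theorem only for $\Re(s)>2$.
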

Theorem \ref{main1} is a generalization of Proposition \ref{sum} with depth $r=2$.
This can be generalized to the arbitrary depth (see Theorem \ref{main2}).
\begin{defn}
 For a non-negative integer $a$ and a positive integer $b$, we define
 $G_{a,b}(s_{1},\dots,s_{a};s)$ inductively by
 \begin{align*}
  G_{a,1}(s_{1},\dots,s_{a};s)
  & :=\zeta(s_{1},\dots,s_{a},s),\\
  G_{a,b}(s_{1},\dots,s_{a};s)
  & :=\sum_{n=0}^{\infty} G_{a+1,b-1}(s_{1},\dots,s_{a},s-n-b;n+b) \\
  & \quad -\sum_{n=0}^{\infty} G_{a+1,b-1}(s_{1},\dots,s_{a},-n;s+n),
 \end{align*}
 where
 $s_{1},\dots,s_{a},s$ are complex numbers such that
 $\Re(s)>b, \Re(s+s_a)>1+b,\dots,\Re(s+s_a+\cdots+s_1)>a+b$.
\end{defn}
\begin{rem}
 The convergence will be proved in Lemma \ref{lem:G_ab_formula}.
\end{rem}
\begin{ex}
 We show some examples of $G_{a,b}(s_{1},\dots,s_{a};s)$:
 \begin{align*}
  &G_{a,2}(s_{1},\dots,s_{a};s) \\
  &=\sum_{n=0}^{\infty} \zeta(s_{1},\dots,s_{a},s-n-2,n+2)
   -\sum_{n=0}^{\infty} \zeta(s_{1},\dots,s_{a},-n,s+n), \\
  &G_{a,3}(s_{1},\dots,s_{a};s) \\
  &=\sum_{n_1=0}^{\infty}
   \Biggl(
   \sum_{n_2=0}^{\infty}
   \zeta(s_{1},\dots,s_{a},s-n_1-3,n_1-n_2+1,n_2+2) \\
  &\qquad\quad\,\,\,
   -\sum_{n_2=0}^{\infty}  \zeta(s_{1},\dots,s_{a},s-n_1-3,-n_2,n_1+n_2+3)
   \Biggr)\\
  &\quad -\sum_{n_1=0}^{\infty}
   \Biggl(
   \sum_{n_2=0}^{\infty} \zeta(s_{1},\dots,s_{a},-n_1,s+n_1-n_2-2,n_2+2) \\
  &\qquad\qquad\,\,
   -\sum_{n_2=0}^{\infty}  \zeta(s_{1},\dots,s_{a},-n_1,-n_2,s+n_1+n_2)
   \Biggr).
 \end{align*}
\end{ex}
\begin{rem}
 We note that $G_{a,b}(s_{1},\dots,s_{a};s)$ is a sum of MZFs of depth $a+b$.
\end{rem}
\begin{thm} \label{main2}
 Let $b$ be a positive integer.
 For $s\in\mathbb{C}$ with $\Re(s)>b$, we have
 \[
  G_{0,b}(s)=\zeta(s).
 \]
\end{thm}

\begin{rem}
 If $s\in\mathbb{Z}_{>b}$, $G_{a,b}(s_{1},\dots,s_{a};s)$ is equal to
 \[
  \sum_{\substack{m_{1}+\cdots+m_{b}=s\\
  m_{1},\dots,m_{b-1}\ge1\\
  m_{b}\ge2
  }
  }\zeta(s_{1},\dots,s_{a},m_{1},\dots,m_{b})
 \]
 by definition.
 Thus the case $s\in\mathbb{Z}_{>b}$ of the theorem implies Proposition \ref{sum}.
\end{rem}

\section{Proof of theorem \ref{main1} }
Let $\sigma:=\Re(s)$ and $\sigma_i:=\Re(s_i)$ for $i\in\mathbb{Z}_{\ge1}$.
The proof of Theorem \ref{main1} is divided into two parts.
For $\sigma>2$, we use series transformation.
For $\sigma>1$, we prove the theorem by showing the regularness of the infinite series on the left-hand side and using analytic continuation.

\begin{proof}[Proof of Theorem \ref{main1} for $\sigma>2$]
 Since the functions $\zeta(s-n-2,n+2)$ and $\zeta(-n,s+n)$ are absolutely
 convergent in $\sigma>2$, we have
 \begin{align*} 
  \begin{split}
   & \sum_{n=0}^{\infty}(\zeta (s-n-2,n+2)-\zeta (-n,s+n))\\
   & =\sum_{n=0}^{\infty}\sum_{0<m_{1}<m_{2}}\left(\frac{1}{m_{1}^{s-n-2}m_{2}^{n+2}}-\frac{1}{m_{1}^{-n}m_{2}^{s+n}}\right)\\
   & =\sum_{n=0}^{\infty}\sum_{0<m_{1}<m_{2}}\left(\frac{m_{1}}{m_{2}}\right)^{n}\left(\frac{1}{m_{1}^{s-2}m_2^2}-\frac{1}{m_{2}^{s}}\right)\\
   & =\sum_{0<m_{1}<m_{2}}\frac{1}{m_{2}^{2}-m_{1}m_{2}}\left(\frac{1}{m_{1}^{s-2}}-\frac{1}{m_{2}^{s-2}}\right).
  \end{split}
 \end{align*}
 Here we note that interchanging the order of summations $\sum_{n=0}^{\infty}$ and $\sum_{0<m_{1}<m_{2}}$ is valid
 because the last sum is absolutely convergent in $\sigma>2$.
 Since
 \begin{align*}
  \sum_{0<m_{1}<m_{2}}\frac{1}{m_{2}^{2}-m_{1}m_{2}}\frac{1}{m_{1}^{s-2}}
  & =\sum_{0<m_{1}}\frac{1}{m_{1}^{s-2}}\sum_{m_{1}<m_{2}}\frac{1}{m_{2}^{2}-m_{1}m_{2}}\\
  & =\sum_{0<m_{1}}\frac{1}{m_{1}^{s-2}}\sum_{m_{1}<m_{2}}\frac{1}{m_{1}}\left(\frac{1}{m_{2}-m_{1}}-\frac{1}{m_{2}}\right)\\
  & =\sum_{0<m_{1}}\frac{1}{m_{1}^{s-1}}\left(1+\frac{1}{2}+\cdots+\frac{1}{m_{1}}\right)
 \end{align*}
 and
 \begin{align*}
  \sum_{0<m_{1}<m_{2}}\frac{1}{m_{2}^{2}-m_{1}m_{2}}\frac{1}{m_{2}^{s-2}}
  & =\sum_{1<m_{2}} \frac{1}{m_{2}^{s-1}} \sum_{0<m_{1}<m_{2}} \frac{1}{m_{2}-m_{1}}\\
  & =\sum_{0<m_{2}}\frac{1}{m_{2}^{s-1}}\left(1+\frac{1}{2}+\cdots+\frac{1}{m_{2}-1}\right),
 \end{align*}
 we have
 \begin{align*}
  \sum_{n=0}^{\infty}(\zeta (s-n-2,n+2)-\zeta (-n,s+n))
  =\sum_{0<m}\frac{1}{m^{s}}
  =\zeta(s).
 \end{align*}
 Then we find the result.
\end{proof}

Next, we prove Theorem \ref{main1} for $1<\sigma\le 2$.
\begin{lem}
 For $s\in\mathbb{C}$ with $s\ne2$ and $\sigma>1$, and $\alpha\in\mathbb{C}$ with $\Re(\alpha)>1$,
 the analytic continuation of $\zeta(s-\alpha,\alpha)$ can be given by
 \[
  \zeta(s-\alpha,\alpha)
  =\sum_{0<m_1} \frac{ 1 }{ m_1^{s-\alpha} }
   \sum_{m_1<m_2} \biggl( \frac{ 1 }{ m_2^{\alpha} } -\int_0^1 \frac{ dt }{ (m_2-t)^{\alpha} } \biggr)
   +\frac{ \zeta(s-1) }{ \alpha-1 }.
 \]
\end{lem}
\begin{proof}
 Since
 \[
  \frac{ 1 }{ m_2^{\alpha} } -\int_0^1 \frac{ dt }{ (m_2-t)^{\alpha} }
  =O\biggl( \frac{ 1 }{ m_2^{\alpha+1} } \biggr)
  \qquad (m_2\rightarrow \infty),
 \]
 the first term on the right-hand side converges absolutely for $\sigma>1$.
 Furthermore, if $\sigma>2$, we have
 \begin{align*}
  &\sum_{0<m_1} \frac{ 1 }{ m_1^{s-\alpha} }
   \sum_{m_1<m_2} \biggl( \frac{ 1 }{ m_2^{\alpha} } -\int_0^1 \frac{ dt }{ (m_2-t)^{\alpha} } \biggr) \\
  &=\sum_{0<m_1} \frac{ 1 }{ m_1^{s-\alpha} }
   \biggl( \sum_{m_1<m_2} \frac{ 1 }{ m_2^{\alpha} } -\int_{m_1}^{\infty} \frac{ du }{ u^{\alpha} } \biggr) \\
  &=\zeta(s-\alpha,\alpha) -\frac{ 1 }{ \alpha-1 } \zeta(s-1). \qedhere
 \end{align*}
\end{proof}

We denote by $[t]$ the greatest integer less than or equal to $t$.
\begin{lem}
 For $s\in\mathbb{C}$ with $s\ne2$ and $\sigma>1$, and $\alpha\in\mathbb{C}$ with $\Re(\alpha)>1$, we have
 \[
  \zeta(s-\alpha,\alpha)
  =-\sum_{0<m} \frac{ \alpha }{ m^{s+1} } \int_{0}^{\infty} \biggl( \frac{ m }{ m+t } \biggr)^{\alpha+1} (t-[t]) dt
   +\frac{ \zeta(s-1) }{ \alpha-1 }.
 \]
\end{lem}

\begin{proof}
 We have
 \begin{align*}
  \frac{1}{m_{2}^{\alpha}}-\int_{0}^{1} \frac{ dt }{ \left(m_{2}-t\right)^{\alpha} }
  &=\int_{0}^{1}\left(\frac{1}{m_{2}^{\alpha}}-\frac{1}{\left(m_{2}-t\right)^{\alpha}}\right) dt \\
  &=-\int_{0<u<t<1} \frac{\alpha}{\left(m_{2}-u\right)^{\alpha+1}} du dt \\
  &=-\int_{0}^{1} \frac{\alpha(1-u)}{\left(m_{2}-u\right)^{\alpha+1}} du.
 \end{align*}
 Thus we have
 \begin{align*}
  \zeta(s-\alpha,\alpha)
  &=-\sum_{0<m_{1}} \frac{1}{m_{1}^{s-\alpha}}
   \sum_{m_{1}<m_{2}} \int_{0}^{1} \frac{\alpha(1-t)}{\left(m_{2}-t\right)^{\alpha+1}} dt
   +\frac{\zeta(s-1)}{\alpha-1} \\
  &=-\sum_{0<m_{1}} \frac{\alpha}{m_{1}^{s+1}}
   \sum_{m_{1}<m_{2}} \int_{0}^{1}\left(\frac{m_{1}}{m_{2}-t}\right)^{\alpha+1}(1-t) dt
   +\frac{\zeta(s-1)}{\alpha-1} \\
  &=-\sum_{0<m_{1}} \frac{\alpha}{m_{1}^{s+1}}
   \sum_{m_{1}\le m_{2}} \int_{0}^{1}\left(\frac{m_{1}}{m_{2}+t}\right)^{\alpha+1} t dt
   +\frac{\zeta(s-1)}{\alpha-1} \\
  &=-\sum_{0<m} \frac{\alpha}{m^{s+1}}
   \int_{0}^{\infty}\left(\frac{m}{m+t}\right)^{\alpha+1}(t-[t]) dt
   +\frac{\zeta(s-1)}{\alpha-1}.
 \end{align*}
 This finishes the proof.
\end{proof}
From the previous lemma, we have
\[
 \frac{ \partial }{ \partial\alpha } \zeta(s-\alpha,\alpha)
 =-H_{1}(\alpha) +\alpha H_{2}(\alpha) -H_{3}(\alpha),
\]
where
\begin{align*}
 H_{1}(\alpha)
 &=\sum_{0<m} \frac{ 1 }{ m^{s+1} } \int_{0}^{\infty} \biggl( \frac{ m }{ m+t } \biggr)^{\alpha+1}(t-[t])dt, \\
 H_{2}(\alpha)
 &=\sum_{0<m} \frac{ 1 }{ m^{s+1} } \int_{0}^{\infty} \log\biggl( \frac{ m+t }{ m } \biggr) \biggl( \frac{ m }{ m+t } \biggr)^{\alpha+1}(t-[t])dt, \\
 H_{3}(\alpha)
 &=\frac{ \zeta(s-1) }{ (\alpha-1)^2 }.
\end{align*}

\begin{lem} \label{ooooo1}
 Assume that $s \in \mathbb{R}$ and $1<s<2$.
 Then, for $\alpha \in \mathbb{R}_{>1}$, we have
 \begin{align*}
  H_{1}(\alpha)=O\biggl(\frac{1}{\alpha^{s}}\biggr) \qquad(\alpha \rightarrow \infty).
 \end{align*}
\end{lem}
\begin{proof}
 Since
 \begin{align*}
  \left(\frac{m}{m+t}\right)^{\alpha+1} \le\biggl(\frac{z}{z+t}\biggr)^{\alpha+1}, \\
  \frac{1}{m^{s+1}} \le \frac{2^{s+1}}{(m+1)^{s+1}} \le \frac{2^{s+1}}{z^{s+1}}
 \end{align*}
 for $t>0, m\in\mathbb{Z}_{>0}$, and $m \le z \le m+1$, we have
 \begin{align*}
  H_{1}(\alpha)
  &\le 2^{s+1} \int_{1}^{\infty}\biggl(\frac{1}{z^{s}} \int_{0}^{\infty}\biggl(\frac{z}{z+t}\biggr)^{\alpha+1}(t-[t])dt\biggr) \frac{dz}{z} \\
  &\le 2^{s+1} \int_{0}^{\infty}\biggl(\frac{1}{z^{s}} \int_{0}^{\infty}\biggl(\frac{z}{z+t}\biggr)^{\alpha+1}(t-[t])dt \biggr) \frac{dz}{z} \\
  &=2^{s+1} \int_{0}^{\infty} \frac{z^{\alpha-s}}{(1+z)^{\alpha+1}} dz \int_{0}^{\infty} \frac{t-[t]}{t^{s}} dt\\
  &=2^{s+1} B(\alpha+1-s, s) \int_{0}^{\infty} \frac{t-[t]}{t^{s}} dt,
 \end{align*}
 where $B$ is the beta function.
 Since the integral
 \[
  \int_{0}^{\infty} \frac{t-[t]}{t^{s}} dt
 \]
 is convergent and
 \[
  B(\alpha+1-s, s)
  =O\biggl(\frac{1}{\alpha^{s}}\biggr),
 \]
 we have the result.
\end{proof}

\begin{lem} \label{ooooo2}
 Assume that $s \in \mathbb{R}$ and $1<s<2$.
 Then, for $\alpha \in \mathbb{R}_{>1}$, we have
 \[
  H_{2}(\alpha)=O\biggl(\frac{1}{\alpha^{s+1}}\biggr) \qquad(\alpha \rightarrow \infty).
 \]
\end{lem}
\begin{proof}
 Since $\log x \le x-1$ for $x \geq 1$, we have
 \begin{align*}
  H_{2}(\alpha)
  &\le \sum_{0<m} \frac{ 1 }{m^{s+1}} \int_{0}^{\infty}\left(\frac{m+t}{m}-1\right)\left(\frac{m}{m+t}\right)^{\alpha+1}(t-[t]) dt \\
  &=\sum_{0<m} \frac{ 1 }{m^{s+2}} \int_{0}^{\infty}\left(\frac{m}{m+t}\right)^{\alpha+1} t(t-[t]) dt.
 \end{align*}
 Since
 \begin{align*}
   \left(\frac{m}{m+t}\right)^{\alpha+1} \le\left(\frac{z}{z+t}\right)^{\alpha+1}, \\
   \frac{1}{m^{s+2}} \le \frac{2^{s+2}}{(m+1)^{s+2}} \le \frac{2^{s+2}}{z^{s+2}}
 \end{align*}
 for any $0<t, m \in \mathbb{Z}_{>0}$ and $m \le z \le m+1$, we have
 \begin{align*}
  \frac{ H_{2}(\alpha) }{ 2^{s+2} }
  &\le \frac{ 1 }{ 2^{s+2} } \sum_{0<m} \frac{ 1 }{m^{s+2}} \int_{0}^{\infty}\left(\frac{m}{m+t}\right)^{\alpha+1} t(t-[t]) dt \\
  &\le \int_{1}^{\infty} \biggl( \frac{ 1 }{z^{s+2}} \int_{0}^{\infty} \biggl(\frac{ z }{ z+t }\biggr)^{\alpha+1} t(t-[t])dt \biggr) dz \\
  &\le \int_{0}^{\infty} \biggl( \frac{ 1 }{z^{s+1}} \int_{0}^{\infty} \biggl(\frac{ z }{ z+t }\biggr)^{\alpha+1} t(t-[t])dt \biggr) \frac{ dz }{ z }  \\
  &=\int_{0}^{\infty} \frac{ z^{\alpha-s-1} }{ (1+z)^{\alpha+1} } dz
   \int_{0}^{\infty} \frac{t-[t]}{t^{s}} dt \\
  &=B(\alpha-s,s+1) \int_{0}^{\infty} \frac{t-[t]}{t^{s}} dt.
 \end{align*}
 Similar to the proof of the previous lemma, we have the result.
\end{proof}

\begin{lem} \label{aaaallll}
 Let $M>0$.
 For $0<\delta<\min(1,\sigma-1)\in\mathbb{R}$, we have
 \[
  \frac{\partial}{\partial \alpha} \zeta(s-\alpha, \alpha)
  =O\biggl(\frac{1}{\Re(\alpha)^{1+\delta}}\biggr)
  \qquad(\Re(\alpha) \rightarrow \infty, |\Im(\alpha)|<M).
 \]
\end{lem}
\begin{proof}
 %
 Since
 \[
  H_{3}(\alpha)=O\left(\frac{1}{\Re(\alpha)^{2}}\right)
  \qquad (\Re(\alpha) \rightarrow \infty),
 \]
 we need to show
 \begin{align} \label{ppppp1}
  H_{1}(\alpha)=O\left(\frac{1}{\Re(\alpha)^{1+\delta}}\right)
   \qquad(\Re(\alpha) \rightarrow \infty)
 \end{align}
 and
 \begin{align} \label{ppppp2}
  \alpha H_{2}(\alpha)=O\left(\frac{1}{\Re(\alpha)^{1+\delta}}\right)
   \qquad(\Re(\alpha) \rightarrow \infty, |\Im(\alpha)|<M).
 \end{align}
 Note that
 \begin{align*}
  \left|H_{1}(\alpha)\right|
  &\le \sum_{0<m} \frac{1}{m^{(1+\delta)+1}} \int_{0}^{\infty}\left(\frac{m}{m+t}\right)^{\Re(\alpha)+1}(t-[t]) dt, \\
  \left|H_{2}(\alpha)\right|
  &\le \sum_{0<m} \frac{ 1 }{m^{(1+\delta)+1}}
   \int_{0}^{\infty} \log \left(\frac{m+t}{m}\right)\left(\frac{m}{m+t}\right)^{\Re(\alpha)+1}(t-[t]) dt
 \end{align*}
 hold.
 Then, by Lemmas \ref{ooooo1} and \ref{ooooo2}, we obtain \eqref{ppppp1} and \eqref{ppppp2}.
\end{proof}

\begin{prop} \label{abcd}
 Let $M>0$ and $s\in\mathbb{C}$ with $s\ne2$, $\sigma>1$, and $|\Im(s)|<M$.
 For $0<\delta<\min(1,\sigma-1)\in\mathbb{R}$, we have
 \[
  |\zeta(s-n-2,n+2)-\zeta(-n,s+n)|\le O\biggl( \frac{ 1 }{ n^{1+\delta} } \biggr).
 \]
\end{prop}
\begin{proof}
 From
 \begin{align*}
  \zeta(s-n-2, n+2)-\zeta(-n, s+n)
  &=\int_{n+s}^{n+2}\biggl(\frac{\partial}{\partial \alpha}\zeta(s-\alpha, \alpha)\biggr) d\alpha \\
  &=\int_{n+s}^{n+2} O\biggl( \frac{ 1 }{ \Re(\alpha)^{1+\delta} }  \biggr) d\alpha \\
  &=\int_{n+s}^{n+2} O\biggl( \frac{ 1 }{ \min(n+2,n+\sigma)^{1+\delta} }  \biggr) d\alpha
 \end{align*}
 by Lemma \ref{aaaallll}, we find the result.
\end{proof}
\begin{proof}[Proof of Theorem \ref{main1}]
 It follows from Proposition \ref{abcd} that
 the sum $\sum_{n=0}^{\infty}(\zeta (s-n-2,n+2)-\zeta (-n,s+n))$
 uniformly converges on any compact subsets of $\{s\in\mathbb{C} \mid s\ne2, \sigma>1 \}$, and holomorphic on the region.
 By the identity theorem, this finishes the proof.
\end{proof}

\section{Proof of Theorem \ref{main2}}
\begin{defn}
 For a positive integer $d$, a non-negative integer $D$, and $s\in\mathbb{C}$, we define
 \begin{align*}
  F_{d}(D;s)
  &:=\sum_{D<m}
   \frac{1}{m^{s-d}}
   \sum_{m-D\le x_{1}\le\cdots\le x_{d}\le m}\frac{1}{x_{1}\cdots x_{d}}, \\
  F^{(1)}_{d}(D;s)
  &:=\sum_{D<t<m}
   \frac{1}{t^{s-d-1} (m-t)}
   \sum_{m-t\le x_{1}\le\cdots\le x_{d}\le m}\frac{1}{x_{1}\cdots x_{d}}, \\
  F^{(2)}_{d}(D;s)
  &:=\sum_{D<t<m} \frac{1}{t^{s-d-1} m}
   \sum_{m-t\le x_{1}\le\cdots\le x_{d}\le m}\frac{1}{x_{1}\cdots x_{d}}, \\
  F^{(3)}_{d}(D;s)
  &:=\sum_{D<t<m} \frac{1}{m^{s-d-1}(m-t)}
   \sum_{m-t\le x_{1}\le\cdots\le x_{d}\le m}\frac{1}{x_{1}\cdots x_{d}}.
 \end{align*}
\end{defn}
\begin{lem} \label{3232}
 If $\sigma>1$, the function $F_{d}(D;s)$ converges absolutely.
 In addition, if $\sigma>d+2$, the functions $F^{(1)}_{d}(D;s)$, $F^{(2)}_{d}(D;s)$, and $F^{(3)}_{d}(D;s)$ converge absolutely.
 Moreover, we have

 \begin{align}\label{Fdi_upper}
  \Bigl|F^{(i)}_{d}(D;s) \Bigr|
  \ll \sum_{ D<t } \frac{ (\log t)^{d+1} }{t^{\sigma-d-1} }
 \end{align}
 for $i=1,2,3$, where the implicit constant does not depend on $D$ and $s$.
\end{lem}
\begin{proof}
 The convergence of $F_{d}(D;s)$ is immediate. The convergence of $F^{(i)}_{d}(D;s)$ follows from (\ref{Fdi_upper}).
 Since
 \begin{align*}
 \Bigl|F^{(i)}_{d}(D;s) \Bigr| \leq F^{(1)}_{d}(D;\sigma),
 \end{align*}
 it is enough to prove (\ref{Fdi_upper}) for $i=1$ and $s\in\mathbb{R}_{>d+2}$.
 Write  $F^{(1)}_{d}(D;s)$ as $A+B$ where
 \begin{align*}
  A
  &:=\sum_{D<t\le m/2}
   \frac{1}{t^{s-d-1} (m-t)}
   \sum_{m-t\le x_{1}\le\cdots\le x_{d}\le m}\frac{1}{x_{1}\cdots x_{d}}, \\
  B
  &:=\sum_{\substack{ D<t \\ m/2<t<m }}
   \frac{1}{t^{s-d-1} (m-t)}
   \sum_{m-t\le x_{1}\le\cdots\le x_{d}\le m}\frac{1}{x_{1}\cdots x_{d}}.
 \end{align*}
 Since
 \[
  \sum_{m-t\le x_{1}\le\cdots\le x_{d}\le m}\frac{1}{x_{1}\cdots x_{d}}
  \ll \frac{ t^d }{ (m-t)^d },
 \]
 we have
 \begin{align*}
  A
  &\ll \sum_{D<t\le m/2} \frac{1}{t^{s-2d-1} (m-t)^{d+1}}
  \ll \sum_{D<t\le m/2} \frac{1}{t^{s-2d-1} m^{d+1}} \\
  &\ll \sum_{D<t} \frac{1}{t^{s-d-1} }.
 \end{align*}
 We also have
 \begin{align*}
  B
  \ll \sum_{\substack{ D<t \\ m/2<t<m }} \frac{ (\log m)^{d} }{t^{s-d-1} (m-t)}
  \ll \sum_{ D<t<m<2t } \frac{ (\log m)^{d} }{t^{s-d-1} (m-t)}.
 \end{align*}
 Thus, putting $n=m-t$, we get
 \begin{align*}
  B
  \ll \sum_{\substack{ D<t \\ 1\le n<t }} \frac{ (\log(2t))^{d} }{t^{s-d-1} n}
  \ll \sum_{ D<t } \frac{ (\log(2t))^{d} \log t }{t^{s-d-1} }.
 \end{align*}
 This finishes the proof.
\end{proof}

We need Lemmas \ref{lem:Fd_formula} and \ref{lem:G_ab_formula} to prove Theorem \ref{main2}.
\begin{lem} \label{lem:Fd_formula}
 If $\sigma>d+2$, we have
 \[
  F_{d+1}(D;s)
  =F^{(1)}_{d}(D;s)-F^{(2)}_{d}(D;s)-F^{(3)}_{d}(D;s).
 \]
\end{lem}
\begin{proof}
 By the previous lemma, $F^{(1)}_{d}(D;s)$, $F^{(2)}_{d}(D;s)$, and $F^{(3)}_{d}(D;s)$ converge absolutely.
 Since
 \begin{align*}
  F^{(1)}_{d}(D;s)
  &=\sum_{D<t} \frac{1}{t^{s-d-1}} \sum_{0<x_{0}\le x_{1}\le\cdots\le x_{d}\le x_{0}+t}
   \frac{1}{x_{0}x_{1}\cdots x_{d}} \\
  &=\sum_{D<t} \frac{1}{t^{s-d-1}} \sum_{\substack{0<x_{0}\le x_{1}\le\cdots\le x_{d}\\x_{d}\le x_{0}+t}}
   \frac{1}{x_{0}x_{1}\cdots x_{d}}, \\
  F^{(2)}_{d}(D;s)
  &=\sum_{D<t} \frac{1}{t^{s-d-1}} \sum_{0<x_{d+1}-t\le x_{1}\le\cdots\le x_{d}\le x_{d+1}}
   \frac{1}{x_{1}\cdots x_{d}x_{d+1}} \\
  &=\sum_{D<t}\frac{1}{t^{s-d-1}}\sum_{\substack{0<x_{0}\le x_{1}\le\cdots\le x_{d}\\x_{d}>t\\x_{d}\le x_{0}+t}}
  \frac{1}{x_{0}x_{1}\cdots x_{d}}, \\ 
  F^{(3)}_{d}(D;s)
  &=\sum_{D<t<N}\frac{1}{N^{s-d-1}(N-t)}
   \sum_{N-t\le x_{1}\le\cdots\le x_{d}\le N}\frac{1}{x_{1}\cdots x_{d}} \\
  &=\sum_{D<N}\frac{1}{N^{s-d-1}}\sum_{\substack{0<x_{0}\le x_{1}\le\cdots\le x_{d}\le N\\x_{0}<N-D}
   }\frac{1}{x_{0}x_{1}\cdots x_{d}} \\
  &=\sum_{D<t}\frac{1}{t^{s-d-1}}\sum_{\substack{0<x_{0}\le\cdots\le x_{d}\\x_{d}\le t\\x_{0}<t-D}}
   \frac{1}{x_{0}\cdots x_{d}},
 \end{align*}
 we have
  \begin{align*}
   &F^{(1)}_{d}(D;s) -F^{(2)}_{d}(D;s) -F^{(3)}_{d}(D;s) \\
   &= \sum_{D<t}\frac{1}{t^{s-d-1}}\sum_{\substack{t-D\le x_{0}\le x_{1}\le\cdots\le x_{d}\le t}}
   \frac{1}{x_{0}x_{1}\cdots x_{d}}\\
  &=F_{d+1}(D;s).
 \end{align*}
 Hence we find the result.
\end{proof}

\begin{lem} \label{lem:G_ab_formula}
 Let $a$ be a non-negative integer and b an integer with $b\ge2$.
 If $\sigma>b,\sigma +\sigma_a>1+b,\ldots, \sigma+\sigma_a+\cdots+\sigma_1>a+b$, then
 $G_{a,b}(s_{1},\dots,s_{a};s)$ is well-defined and equal to
  \begin{align} \label{ppppp}
  \sum_{0<m_{1}<\cdots<m_{a}<m}\frac{1}{m_{1}^{s_{1}}\cdots m_{a}^{s_{a}}m^{s-b+1}}
  \sum_{m-m_{a}\le x_{1}\le\cdots\le x_{b-1}\le m}\frac{1}{x_{1}\cdots x_{b-1}}.
 \end{align}
 Here we understand that $m-m_a=m$ if $a=0$.
\end{lem}
\begin{proof}
 Note that \eqref{ppppp} converges absolutely.
 The proof is by induction on $b$.
 We can prove the case $b=2$ in the similar manner as in the proof of Theorem \ref{main1} for $\sigma>2$.
 For $b\ge3$, by the induction hypothesis, we have
 \begin{align*}
  & G_{a,b}(s_{1},\dots,s_{a};s) \\
  &=\sum_{n=0}^{\infty} G_{a+1,b-1}(s_{1},\dots,s_{a},s-n-b;n+b)
   -\sum_{n=0}^{\infty}G_{a+1,b-1}(s_{1},\dots,s_{a},-n;s+n) \\
  & =\sum_{n=0}^{\infty}\sum_{0<m_{1}<\cdots<m_{a+1}<m}
   \frac{1}{m_{1}^{s_{1}}\cdots m_{a}^{s_{a}} m_{a+1}^{s-n-b}m^{n+2}}
   \sum_{m-m_{a+1}\le x_{1}\le\cdots\le x_{b-2}\le m}
   \frac{1}{x_{1}\cdots x_{b-2}} \\
   &\quad -\sum_{n=0}^{\infty}\sum_{0<m_{1}<\cdots<m_{a+1}<m}
   \frac{1}{m_{1}^{s_{1}}\cdots m_{a}^{s_{a}} m_{a+1}^{-n}m^{s+n-b+2}}
   \sum_{m-m_{a+1}\le x_{1}\le\cdots\le x_{b-2}\le m}
   \frac{1}{x_{1}\cdots x_{b-2}}.
 \end{align*}
 Then we have
 \begin{align*}
  &\sum_{n=0}^{\infty}\sum_{0<m_{1}<\cdots<m_{a+1}<m}
   \frac{1}{m_{1}^{s_{1}}\cdots m_{a}^{s_{a}} m_{a+1}^{s-n-b}m^{n+2}}
   \sum_{m-m_{a+1}\le x_{1}\le\cdots\le x_{b-2}\le m}
   \frac{1}{x_{1}\cdots x_{b-2}} \\
  &=\sum_{0<m_{1}<\cdots<m_{a+1}<m}
   \frac{1}{m_{1}^{s_{1}}\cdots m_{a}^{s_{a}} m_{a+1}^{s-b+1} } \biggl(\frac{1}{m-m_{a+1}} -\frac{1}{m} \biggr)
   \sum_{m-m_{a+1}\le x_{1}\le\cdots\le x_{b-2}\le m}
   \frac{1}{x_{1}\cdots x_{b-2}} \\
  &=\sum_{0<m_{1}<\cdots<m_{a}}
   \frac{1}{m_{1}^{s_{1}}\cdots m_{a}^{s_{a}} }
   \Bigl( F^{(1)}_{b-2} (m_a;s) -F^{(2)}_{b-2} (m_a;s) \Bigr)
\end{align*}
 and
 \begin{align*}
   &\sum_{n=0}^{\infty}\sum_{0<m_{1}<\cdots<m_{a+1}<m}
   \frac{1}{m_{1}^{s_{1}}\cdots m_{a}^{s_{a}} m_{a+1}^{-n}m^{s+n-b+2}}
   \sum_{m-m_{a+1}\le x_{1}\le\cdots\le x_{b-2}\le m}
   \frac{1}{x_{1}\cdots x_{b-2}} \\
   &=\sum_{0<m_{1}<\cdots<m_{a+1}<m}
   \frac{1}{m_{1}^{s_{1}}\cdots m_{a}^{s_{a}} m^{s-b+1} (m-m_{a+1})}
   \sum_{m-m_{a+1}\le x_{1}\le\cdots\le x_{b-2}\le m}
   \frac{1}{x_{1}\cdots x_{b-2}} \\
  &=\sum_{0<m_{1}<\cdots<m_{a}}
   \frac{1}{m_{1}^{s_{1}}\cdots m_{a}^{s_{a}} }
   F^{(3)}_{b-2} (m_a;s).
\end{align*}
 Since the series
 \begin{align*}
  &\sum_{0<m_{1}<\cdots<m_{a}}
   \frac{1}{m_{1}^{s_{1}}\cdots m_{a}^{s_{a}} }
   F^{(i)}_{b-2} (m_a;s) \qquad (i=1,2,3)
 \end{align*}
 are absolutely convergent by Lemma \ref{3232}, we see that $G_{a,b}(s_{1},\dots,s_{a};s)$ is also absolutely convergent, and furthermore, we have
 \begin{align*}
  & G_{a,b}(s_{1},\dots,s_{a};s) \\
  & =\sum_{0<m_{1}<\cdots<m_{a}}\frac{1}{m_{1}^{s_{1}}\cdots m_{a}^{s_{a}}}
   \Bigl( F^{(1)}_{b-2}(m_{a};s) -F^{(2)}_{b-2}(m_{a};s) -F^{(3)}_{b-2}(m_{a};s) \Bigr) \\
  & =\sum_{0<m_{1}<\cdots<m_{a}}\frac{1}{m_{1}^{s_{1}}\cdots m_{a}^{s_{a}}}F_{b-1}(m_{a};s) \\
  & =\sum_{0<m_{1}<\cdots<m_{a}<m}\frac{1}{m_{1}^{s_{1}}\cdots m_{a}^{s_{a}}m^{s-b+1}}
   \sum_{m-m_{a}\le x_{1}\le\cdots\le x_{b-1}\le m}\frac{1}{x_{1}\cdots x_{b-1}}
 \end{align*}
 from Lemma \ref{lem:Fd_formula}.
 Hence the lemma is proved.
\end{proof}
%

\begin{proof}[Proof of Theorem \ref{main2}]
 From Lemma \ref{lem:G_ab_formula}, we have
 \begin{align*}
  G_{0,b}(s)
  &= \sum_{0<m}\frac{1}{m^{s-b+1}}\sum_{m\le x_{1}\le\cdots\le x_{b-1}\le m}\frac{1}{x_{1}\cdots x_{b-1}}\\
  &= \zeta(s).\qedhere
 \end{align*}
\end{proof}

\section*{Acknowledgements}
This work was supported by JSPS KAKENHI Grant Numbers JP18J00982, JP18K13392, and JP19K14511.


\end{document}